\newtheorem{thm}{Theorem}
\newtheorem{prop}{Proposition}
\theoremstyle{remark}
\newtheorem*{rem}{Remark}
\begin{document}

\title[A Quantum Gauss-Bonnet Theorem]{A Quantum Gauss-Bonnet Theorem}
\author{Taylor Friesen}

\begin{abstract}
In \cite{lp}, Lanzat and Polyak introduced a polynomial invariant of generic curves in the plane as a quantization of Hopf's Umlaufsatz, and showed that Arnold's $J^+$ invariant could be derived from their polynomial, leading to an integral formula for $J^+$.
Here we extend their invariant to the case of homologically trivial generic curves in closed oriented surfaces with Riemannian metric. The resulting invariant turns out to be a quantization of a new formula for the rotation number, which can be viewed as a form of the Gauss-Bonnet Theorem. We show that $J^+$ can be calculated from the generalized invariant when the Euler characteristic of the surface is nonzero, thereby obtaining an integral formula for $J^+$ for homologically trivial curves in oriented surfaces with nonzero Euler characteristic.
\end{abstract}

\maketitle

\section{Introduction}
We begin by considering an immersed curve $\Gamma: S^1 \to \mathbb R^2$. The \emph{rotation number} $\mathrm{rot}(\Gamma)$ is defined to be the (signed) number of turns made by the tangent vector as we travel along $\Gamma$, or more formally the degree of $\Gamma$'s Gauss map. The \emph{index} $\mathrm{ind}_{\Gamma}(p)$ of a point $p \in \mathbb R^2 \setminus \Gamma$ with respect to $\Gamma$ is the (signed) number of times $\Gamma$ revolves around $p$, or more formally the degree of the map $S^1 \to S^1$ given by
\[ t \mapsto \frac{\Gamma(t) - p}{\|\Gamma(t) - p\|} \]
\begin{rem}The reader should beware that in some sources the rotation number is referred to as the index of the curve, and that the term ``winding number'' may be used to refer either to the index or the rotation number.
\end{rem}
It is a basic fact of differential geometry that $\Gamma$'s rotation number can be calculated by integrating its curvature and dividing by $2\pi$, i.e.
\begin{equation}\label{HU-multiplicities}
\frac{1}{2\pi} \int_{S^1} k(t) \, dt = \mathrm{rot}(\Gamma)
\end{equation}
This formula is referred to as Hopf's Umlaufsatz by Lanzat and Polyak in \cite{lp}, although other sources, including the paper in which Hopf introduced the term, \cite{hopf}, use ``Umlaufsatz'' to refer to the more specific case where $\Gamma$ is a simple curve (i.e. $\Gamma$ has no self-intersections), in which case
\begin{equation}\label{HU-original}
\frac{1}{2\pi}\int_{S^1} k(t) \, dt = \pm 1
\end{equation}
In this sense Equation (\ref{HU-multiplicities}) can be seen as an ``Umlaufsatz with multiplicities''. Later, when we consider curves in more general surfaces, we will see that we need a ``Gauss-Bonnet theorem with multiplicities'' in the same vein.

While the rotation number can be calculated for any immersed curve in the plane and is invariant under all regular homotopies, Arnold's invariants $J^\pm$ and $St$ are defined only for \emph{generic} curves in the plane---curves which have a finite number of transverse double points as their only self-intersections---and are invariant under regular homotopies within this narrower class.

It is well-known that if two generic curves are regular homotopic in the space of immersed curves, then one can be transformed into the other by a series of homotopies in the space of generic curves together with a finite number of \emph{self-tangency} and \emph{triple-point} moves, shown in Figure \ref{moves}. Self-tangency moves can be further classified into direct and opposite self-tangencies, according to whether the two involved arcs of the curve are pointing in the same direction or in opposite directions (see Figure \ref{st-classification}). Arnold's $J^+$ invariant can be understood as counting direct self-tangency moves: It increases by 2 under direct self-tangency moves and is unchanged under opposite self-tangency moves and triple-point moves. The $J^-$ and $St$ invariants play analogous roles for opposite self-tangency moves and triple-point moves, respectively \cite{j+}.

\begin{figure}
\centering
\begin{subfigure}{0.4\textwidth}
	\includegraphics[width=\textwidth, trim = 1cm 18cm 4cm 3cm, clip]{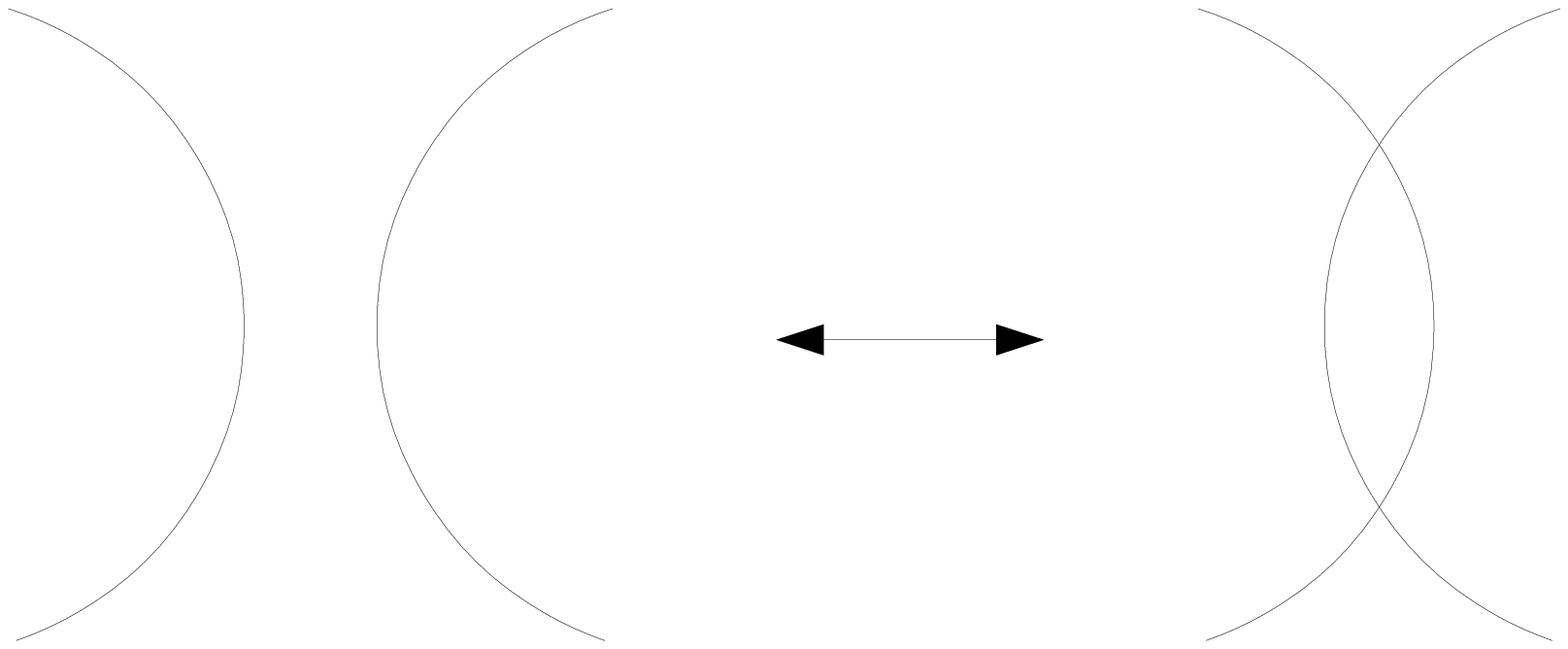}
	\caption{A self-tangency move} 
\end{subfigure}
\hspace{2cm}
\begin{subfigure}{0.4\textwidth}
	\includegraphics[width=\textwidth, trim = 0cm 19cm 1cm 0, clip]{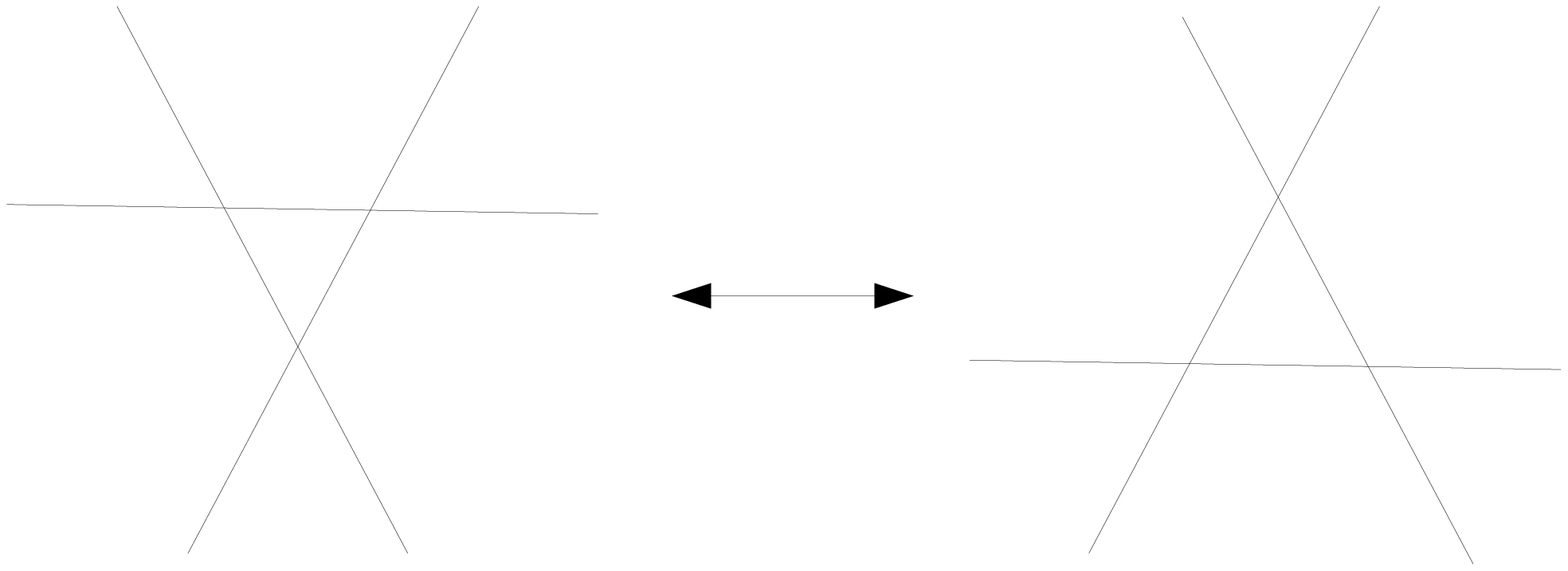}
	\caption{A triple-point move}
\end{subfigure}
\caption{The self-tangency and triple-point moves}
\label{moves}
\end{figure}

\begin{figure}
\centering
\begin{subfigure}{0.4\textwidth}
	\includegraphics[width=\textwidth, trim = 1cm 18cm 3cm 3cm, clip]{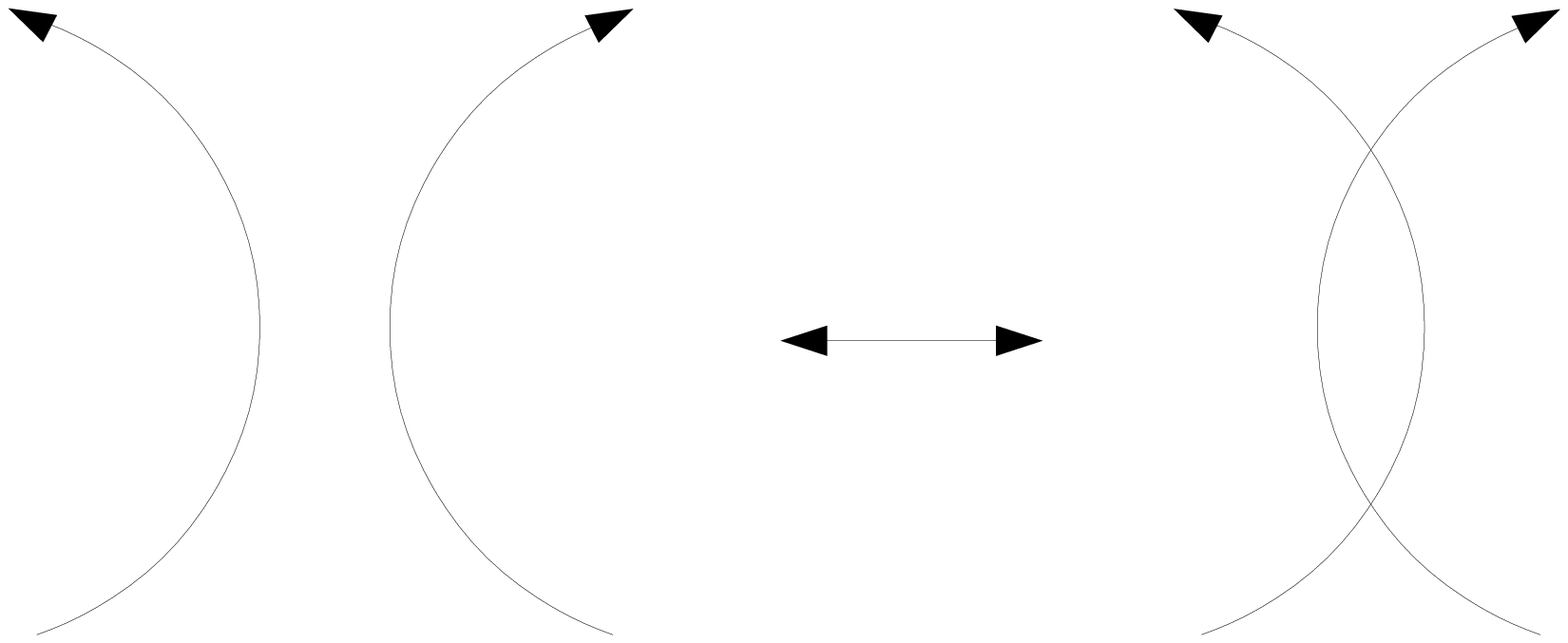}
	\caption{A direct self-tangency move}
\end{subfigure}
\hspace{2cm}
\begin{subfigure}{0.4\textwidth}
	\includegraphics[width=\textwidth, trim = 1cm 18cm 3cm 3cm, clip]{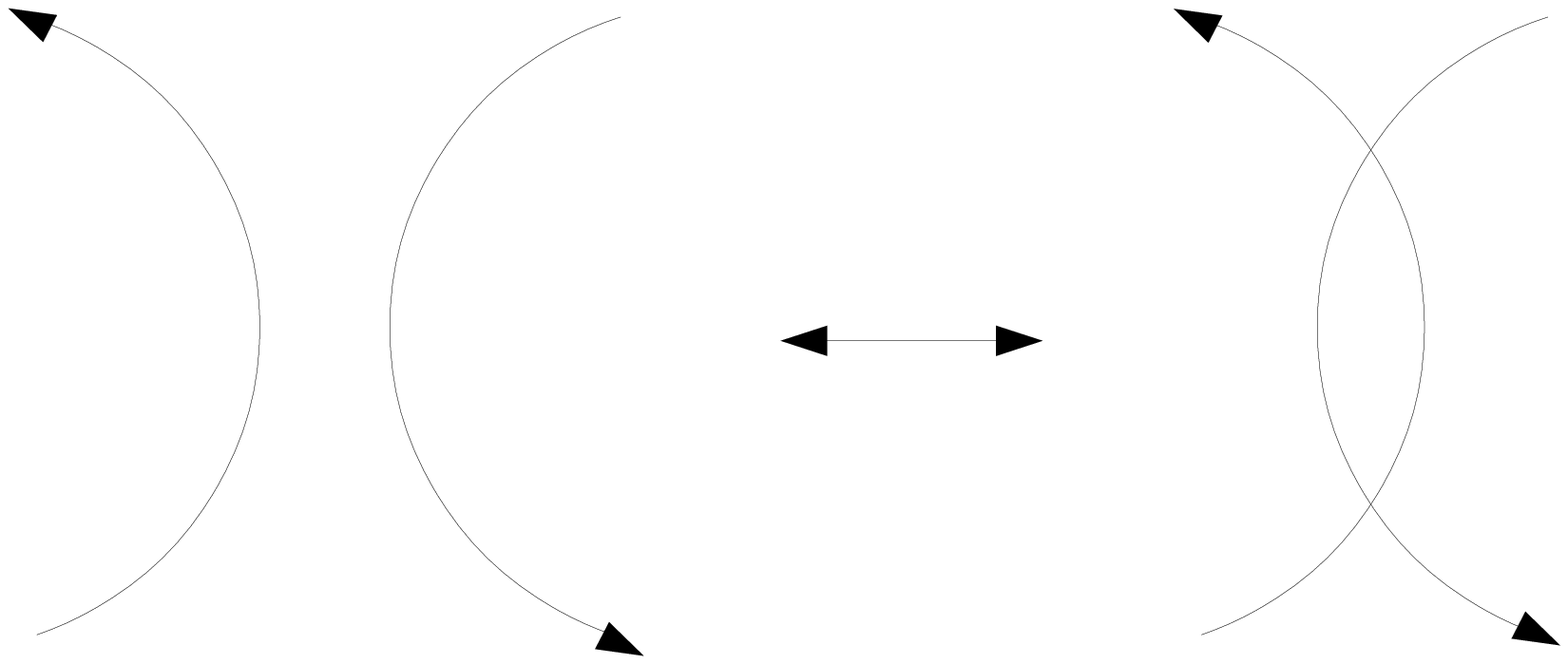}
	\caption{An opposite self-tangency move}
\end{subfigure}
\caption{Direct and opposite self-tangency moves}
\label{st-classification}
\end{figure}

This characterization of $J^+(\Gamma)$ determines it uniquely up to addition of a term depending only on the regular homotopy class of $\Gamma$ (in the class of immersed curves). Arnold specifies a value of $J^+$ on a standard representative of each homotopy class of $\Gamma$, thereby specifying it exactly \cite{j+}.

Lanzat and Polyak's polynomial invariant in \cite{lp} is likewise defined for generic curves in the plane. For such a $\Gamma$, they extend $\mathrm{ind}_{\Gamma}$ to a total function on $\mathbb R^2$ by first noting that $\mathrm{ind}_{\Gamma}$ is locally constant on $\mathbb R^2 \setminus \Gamma$ and then defining $\mathrm{ind}_\Gamma(p)$ for $p \in \Gamma$ to be the average of $\mathrm{ind}_\Gamma$ over the connected components of $\mathbb R^2 \setminus \Gamma$ in a neighborhood of $p$ (See Figure \ref{index}). Then Lanzat and Polyak's invariant $I_q(\Gamma) \in \mathbb R[q^{\frac12}, q^{-\frac12}]$ is given by
\begin{equation}\label{lp}
I_q(\Gamma) = \frac1{2\pi} \left(\int_{S^1} k(t) \cdot q^{\mathrm{ind}_\Gamma(\Gamma(t))} \, dt
- \sum_{d\in X} \theta_d \cdot q^{\mathrm{ind}_\Gamma(d)}(q^{\frac12} - q^{-\frac12})\right)
\end{equation}
where $X$ is the set of double points of $\Gamma$, and for each $X \ni d = \Gamma(t_1) = \Gamma(t_2)$, one defines $\theta_d \in (0,\pi)$ as the unsigned angle between $\Gamma'(t_1)$ and $-\Gamma'(t_2)$.

Substituting $q=1$ into the polynomial immediately gives
\[ I_1(\Gamma) = \frac{1}{2\pi} \int_{S^1} k(t) \, dt = \mathrm{rot}(\Gamma) \]
It is in this sense that Lanzat and Polyak's polynomial is considered to be a quantization or quantum deformation of the rotation number. Lanzat and Polyak show that their polynomial is invariant under regular homotopies in the space of generic curves. They calculate its value on representatives of the regular homotopy classes (with respect to regular homotopies in the space of \emph{all} immersed curves) and how it changes under the different kinds of self-tangency and triple-point moves. From these results, they easily show the relation
\[ I_1'(\Gamma) = \frac12 (1 - J^+(\Gamma)) \]
where $I_1'(\Gamma)$ is the linear term in the Taylor expansion of $I_q(\Gamma)$ at $q=1$. From this relation and Equation (\ref{lp}) they obtain an integral expression for $J^+(\Gamma)$.

\begin{figure}
\centering
\includegraphics[width = .33\textwidth, trim = 5.5cm 17cm 10cm 5cm, clip]{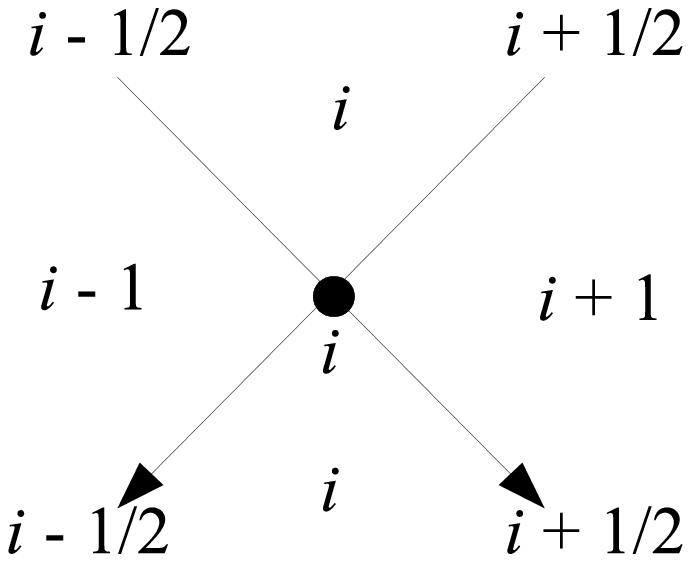}
\caption{A double point of index $i$}
\label{index}
\end{figure}

Both the rotation number and the $J^+$ invariant have been extended to curves in more general surfaces. For the rotation number, first note that the rotation number on the plane can be defined as the unique homomorphism from the group of regular homotopy classes of curves immersed in the plane to the group $\mathbb Z$, subject to the constraint that a small counterclockwise loop should map to 1. To determine a rotation number for a curve in a surface $S$, we first need to specify a set of smooth generators of the fundamental group of $S$; then there is a unique homomorphism from the group of regular homotopy classes of curves in $S$ to $\mathbb Z/|\chi(S)|\mathbb Z$, subject to the constraints that the representatives of the fundamental group map to 0 and that a small counterclockwise contractible loop maps to 1 \cite{mc}. Here $\chi(S)$ is the Euler characteristic of $S$, and $\mathbb Z/|\chi(S)|\mathbb Z$ is simply $\mathbb Z$ if $\chi(S)$ is 0. The choice of generators does not affect the rotation number of homologically trivial curves, which will be the focus of most of the remainder of the paper. For our purposes, then, we can take the following as the definition of the rotation number: The \emph{rotation number} for homologically trivial curves in a closed oriented surface $S$ is the unique homomorphism from the group of regular homotopy classes of homologically trivial curves in $S$ to $\mathbb Z/|\chi(S)|\mathbb Z$, subject to the constraint that a small counterclockwise contractible loop maps to 1.

Several generalizations of $J^+$ are given by Viro in \cite{giro}. Most significantly for our purposes, he defines an invariant $J^+(\Gamma)$ for generic homologically trivial curves immersed in oriented closed surfaces, which is unchanged by opposite self-tangency moves and triple-point moves, and which under direct self-tangency moves increases by 2.
\begin{rem}
Actually Viro generalizes the related invariant $J^-$, which is unchanged by direct self-tangency moves and triple-point moves, and which decreases by 2 at opposite self-tangencies. As he points out, though, given such an invariant $J^-$ we can immediately define an invariant $J^+$ by $J^+ = J^- + n$, where $n$ is the number of double points of the curve. It is easily checked that $J^+$ defined in this way is unchanged by opposite self-tangency moves and triple-point moves, and increases by 2 under direct self-tangency moves.
\end{rem}

Viro's $J^+$ generalizes Arnold's earlier construction of a $J^+$ invariant for curves on the sphere.
To calculate $SJ^+(\Gamma)$, as Arnold called it, stereographically project the sphere onto the plane, choosing some point in $S^2 \setminus \Gamma$ to be the point at infinity. Then letting $\Gamma'$ be the resulting curve in the plane,
\begin{equation}\label{sj+}
 SJ^+(\Gamma) = J^+(\Gamma') + \frac{\mathrm{rot}(\Gamma')^2}{2}
\end{equation}
Arnold showed that $SJ^+$ is independent of the point we choose as the point at infinity in the stereographic projection, and it clearly changes in the way that a $J^+$ invariant is required to under self-tangency and triple-point moves.

In this paper we attempt to generalize Lanzat and Polyak's polynomial and related results to curves in surfaces, as far as is possible. We introduce a more general index function which makes sense in a general connected oriented surface, but the price for this generality is twofold: The new index function $\mathrm{ind}_{\Gamma,b}$ is defined not only in terms of $\Gamma$ but also a base point $b$ in the surface, and $\Gamma$ must be homologically trivial. Using the $\mathrm{ind}_{\Gamma,b}$ we construct a polynomial $I_q(\Gamma,b)$, which is defined by integrating local geometric data much as Lanzat and Polyak's polynomial is. We show that $I_q(\Gamma,b)$ is independent of the Riemannian metric on the surface. Evaluating the polynomial at $q=1$, we obtain a new integral formula for the rotation number, which is also a generalization of one form of the Gauss-Bonnet theorem. For surfaces with nonzero Euler characteristic, we show that $J^+(\Gamma)$ can be calculated from the value and first derivative of $I_q(\Gamma,b)$ at $q=1$, thereby obtaining an integral formula for $J^+(\Gamma)$. Lastly, we use this formula to give an explicit expression for $SJ^+$.

\section{Generalizing the Index Function}
In a general surface, the concept of how many times a curve goes around a point is meaningless. Note, however, that before extending its domain from $\mathbb R^2 \setminus \Gamma$ to $\mathbb R^2$, $\mathrm{ind}_\Gamma$ is the unique function $\mathbb R^2 \setminus \Gamma \to \mathbb Z$ satisfying the following conditions:
\begin{enumerate}[(i)] 
\item $\mathrm{ind}_\Gamma$ is locally constant.
\item $\mathrm{ind}_\Gamma$ increases by 1 when we make a positive crossing over $\Gamma$.
\item $\mathrm{ind}_\Gamma(p) = 0$ for a point $p$ on the outside of $\Gamma$.
\end{enumerate}
Although the third condition has no meaning in a general surface, the first two can be considered in any oriented surface. For an oriented closed surface $S$ and a generic smooth curve $\Gamma:S^1 \to S$, constructing an integer-valued function on $\mathbb R^2 \setminus \Gamma$ satisfying the conditions (i) and (ii) is equivalent to constructing a singular 2-chain with boundary $\Gamma$. Thus such a function can be constructed if and only if $\Gamma$ is homologically trivial. In this case, if $S$ is connected then the function is uniquely determined up to addition of a constant. This motivates the following definition: For an oriented connected surface $S$, a homologically trivial generic curve $\Gamma:S^1 \to S$, and a base point $b \in S \setminus \Gamma$, let $\mathrm{ind}_{\Gamma,b}$ be the unique function $S \setminus \Gamma \to \mathbb Z$ satisfying the following conditions:
\begin{enumerate}[(i)]
\item $\mathrm{ind}_{\Gamma,b}$ is locally constant.
\item $\mathrm{ind}_{\Gamma,b}$ increases by 1 when we make a positive crossing over $\Gamma$.
\item $\mathrm{ind}_{\Gamma,b}(b) = 0$.
\end{enumerate}
\begin{rem}
It can be easily checked that $\mathrm{ind}_{\Gamma,b}(p)$ is the intersection index of any path from $b$ to $p$ with $\Gamma$, which can be taken as an alternative definition of $\mathrm{ind}_{\Gamma,b}$. In this case the requirement that $S$ is connected ensures that such a path exists, and the requirement that $\Gamma$ is homologically trivial ensures that the intersection index does not depend on the path chosen.
\end{rem}
As before, we extend $\mathrm{ind}_{\Gamma, b}$ to all of $S$ by saying that for $p$ in the image of $\Gamma$, $\mathrm{ind}_{\Gamma, b}(p)$ is found by averaging $\mathrm{ind}_{\Gamma, b}$ over the connected components of $S \setminus \Gamma$ in a neighborhood of $S$.

\section{Main Result}
Let $S$ be an oriented connected closed surface with Riemannian metric, let $\Gamma: S^1 \to S$ be a homologically trivial generic smooth curve on $S$, and let $b \in S \setminus \Gamma$. Let $X \subseteq S$ be the set of double points of $\Gamma$. For each double point $d \in X$, let $\theta_d$ be the unsigned angle between the tangent vectors $\Gamma'(t_1)$ and $-\Gamma'(t_2)$, where $\{t_1, t_2\} = \Gamma^{-1}(\{d\})$. Let $k_g(t)$ be the geodesic curvature of $\Gamma$ at $\Gamma(t)$, and let $K: S \to \mathbb R$ be the Gaussian curvature. Define $I_q(\Gamma, b)$ as
\[ \frac1{2\pi} \left(
\int_{S_1} k_g(t) \cdot q^{\mathrm{ind}_{\Gamma, b}(\Gamma(t))} \, dt
- \sum_{d \in X} \theta_d \cdot q^{\mathrm{ind}_{\Gamma, b}(d)}(q^{\frac12}-q^{-\frac12})
+ \iint_S K \cdot \frac{q^{\mathrm{ind}_{\Gamma, b}}-1}{q^{\frac12}-q^{-\frac12}} \, dA
\right) \]

\begin{thm} $I_q(\Gamma, b)$ is preserved under orientation-preserving diffeomorphisms. That is, $I_q(\Gamma, b)$ does not depend on the Riemannian metric on $S$.
\end{thm}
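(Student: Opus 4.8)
The plan is to collapse all the geometry in $I_q(\Gamma,b)$ by applying the Gauss--Bonnet theorem face by face, thereby obtaining a closed expression in purely combinatorial data, from which metric-independence (and hence diffeomorphism-invariance) is immediate. Write $S\setminus\Gamma=\bigsqcup_j F_j$ with $\mathrm{ind}_{\Gamma,b}\equiv i_j$ on the face $F_j$, and let $\widehat F_j$ be the compact surface with corners obtained by completing $F_j$ (each of the one or two sectors at a double point occupied by $F_j$ becomes one corner of $\widehat F_j$, so every corner of $\widehat F_j$ lies over a double point of $\Gamma$ and every boundary arc maps to an arc of $\Gamma$). Gauss--Bonnet with corners gives
\[
\iint_{F_j} K\,dA+\int_{\partial\widehat F_j}k_g\,ds+\sum_{c}\varepsilon_c=2\pi\,\chi(\widehat F_j),
\]
with $\varepsilon_c=\pi-\alpha_c$ the exterior angle. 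Set $w_m:=\dfrac{q^{m}-1}{q^{1/2}-q^{-1/2}}\in\mathbb R[q^{1/2},q^{-1/2}]$ for $m\in\mathbb Z$, so that $w_{i_j}$ is the constant value of $\dfrac{q^{\mathrm{ind}_{\Gamma,b}}-1}{q^{1/2}-q^{-1/2}}$ on $F_j$; multiplying the identity above by $w_{i_j}$ and summing over $j$ reassembles the surface integral in $I_q(\Gamma,b)$ from the left-most terms, leaving $\sum_j w_{i_j}\int_{\partial\widehat F_j}k_g\,ds$ and $\sum_j w_{i_j}\sum_c\varepsilon_c$ to be understood.

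For the first I would regroup by arcs. An arc $e$ of $\Gamma$ between consecutive double points lies between the face $F_{L}$ on its left and $F_{R}$ on its right, and condition (ii) for $\mathrm{ind}_{\Gamma,b}$ forces $i_{L}=i_{R}+1$. With the induced boundary orientations, $\partial\widehat F_{L}$ runs along $e$ in the direction of $\Gamma$ and $\partial\widehat F_{R}$ runs against it, so $e$ enters $\sum_j w_{i_j}\int_{\partial\widehat F_j}k_g\,ds$ with coefficient $w_{i_R+1}-w_{i_R}$; from $q^{m+1}-q^m=q^{m}q^{1/2}(q^{1/2}-q^{-1/2})$ we get $w_{m+1}-w_m=q^{m+1/2}$, and $i_R+\tfrac12$ is exactly the averaged value of $\mathrm{ind}_{\Gamma,b}$ along $e$. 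Hence $\sum_j w_{i_j}\int_{\partial\widehat F_j}k_g\,ds=\int_{S^1}k_g(t)\,q^{\mathrm{ind}_{\Gamma,b}(\Gamma(t))}\,dt$, which is precisely the first term of $I_q(\Gamma,b)$.

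For the second I would regroup by double points. At a double point $d$ four sectors meet; since $\theta_d$ is the unsigned angle between $\Gamma'(t_1)$ and $-\Gamma'(t_2)$, two opposite sectors have interior angle $\theta_d$ and the other two have interior angle $\pi-\theta_d$. Tracking the unit jumps of $\mathrm{ind}_{\Gamma,b}$ across the two strands — a short case check organized by the sign of $d$ — shows the two $(\pi-\theta_d)$-sectors share a common index $n$, the two $\theta_d$-sectors have indices $n+1$ and $n-1$, and the average of the four is $n=\mathrm{ind}_{\Gamma,b}(d)$. Thus $d$ contributes $(w_{n+1}+w_{n-1})\pi+(2w_n-w_{n+1}-w_{n-1})\theta_d$ to $\sum_j w_{i_j}\sum_c\varepsilon_c$, and $(q-1)^2=q(q^{1/2}-q^{-1/2})^2$ gives $2w_n-w_{n+1}-w_{n-1}=-q^{n}(q^{1/2}-q^{-1/2})$, so this contribution is $(w_{\mathrm{ind}(d)+1}+w_{\mathrm{ind}(d)-1})\pi-\theta_d\,q^{\mathrm{ind}_{\Gamma,b}(d)}(q^{1/2}-q^{-1/2})$. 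Collecting the three pieces, the first and second terms of $I_q(\Gamma,b)$ cancel against the Gauss--Bonnet contributions and
\[
I_q(\Gamma,b)=\sum_j \chi(\widehat F_j)\,w_{i_j}-\tfrac12\sum_{d\in X}\bigl(w_{\mathrm{ind}_{\Gamma,b}(d)+1}+w_{\mathrm{ind}_{\Gamma,b}(d)-1}\bigr).
\]
Everything on the right — the faces, their Euler characteristics, the index values $i_j$ and $\mathrm{ind}_{\Gamma,b}(d)$ — depends only on $\Gamma$, $b$ and the oriented smooth structure of $S$, and an orientation-preserving diffeomorphism carries this data to the corresponding data for the image curve (positive crossings going to positive crossings); so the theorem follows.

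I expect the main obstacle to be the sign bookkeeping in the double-point step: making the conventions for a ``positive crossing,'' the induced boundary orientation, and the exterior angle mutually consistent, and checking that the index pattern $\{n,\,n+1,\,n,\,n-1\}$ around $d$ (with the two $n$'s on the $\pi-\theta_d$ sectors) is forced for both signs of $d$. One should also confirm that Gauss--Bonnet applies legitimately to $\widehat F_j$ even when $\partial\widehat F_j\to S$ is merely an immersion, and that no corner of any $\widehat F_j$ lies over anything but a double point of $\Gamma$, so that the corner sum is genuinely indexed by $X$ with multiplicity four. (An alternative route, should this accounting prove delicate, is to connect any two metrics by the straight-line path and check directly that the first variation of $I_q(\Gamma,b)$ vanishes, using that the right-hand side of Gauss--Bonnet on each $\widehat F_j$ is constant in the parameter; the same regroupings appear there as well.)
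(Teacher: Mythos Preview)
Your argument is correct. Both you and the paper reduce $I_q(\Gamma,b)$ to a closed combinatorial expression via Gauss--Bonnet, but you apply it to a different decomposition. The paper works with the \emph{superlevel sets} $S_j=\{\mathrm{ind}_{\Gamma,b}>j\}$ for $j\in\tfrac12\mathbb Z\setminus\mathbb Z$: it applies Gauss--Bonnet to each $S_j$, multiplies by $q^j$, sums (handling the infinite sum as a geometric series in $C^\infty((1,\infty))$), and after subtracting the global Gauss--Bonnet identity for $S$ obtains
\[
I_q(\Gamma,b)=\sum_{j\in\frac12\mathbb Z\setminus\mathbb Z}\chi(S_j)\,q^j-\frac{\chi(S)}{q^{1/2}-q^{-1/2}}-\tfrac12\sum_{d\in X}\bigl(q^{\mathrm{ind}(d)+1/2}-q^{\mathrm{ind}(d)-1/2}\bigr).
\]
You instead apply Gauss--Bonnet face by face, weight by $w_{i_j}=(q^{i_j}-1)/(q^{1/2}-q^{-1/2})$, and regroup the boundary and corner contributions by arcs and by double points, arriving at
\[
I_q(\Gamma,b)=\sum_{F}\chi(\widehat F)\,w_{i_F}-\tfrac12\sum_{d\in X}\bigl(w_{\mathrm{ind}(d)+1}+w_{\mathrm{ind}(d)-1}\bigr).
\]
The two closed forms are related by an Abel summation over the index levels, but neither is needed to derive the other. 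What each buys: the paper's level-set form is already a Laurent polynomial in $q^{1/2}$ and plugs directly into the McIntyre--Cairns rotation-number formula used in the next section; your face-by-face form is more elementary (only finitely many compact pieces, no geometric-series step), and the arc/corner regrouping makes the cancellation with the first two terms of $I_q$ completely transparent. Your worry about the double-point bookkeeping is well placed but resolves exactly as you guessed: with $\theta_d$ the angle between $\Gamma'(t_1)$ and $-\Gamma'(t_2)$, the two sectors of interior angle $\pi-\theta_d$ are always the ones carrying the averaged index $n=\mathrm{ind}_{\Gamma,b}(d)$, and the two $\theta_d$-sectors carry $n\pm1$, independent of any ``sign'' convention at $d$.
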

\begin{proof}
For $j \in \frac12 \mathbb Z \setminus \mathbb Z$, let $S_j$ be the subsurface of $S$ where the index is greater than $j$. Observe that $\partial S_j$ is a piecewise smooth curve whose pieces are the arcs of $\Gamma$ with index $j$. The orientation which $\partial S_j$ inherits from $\Gamma$ agrees with its orientation as the boundary of $S_j$. For each $i \in \mathbb Z$ let $X_i \subseteq S$ be the set of double points with degree $i$. At each $d \in X_{j-\frac12}$, $\partial S_j$ changes direction by $\pi-\theta_d$. At each $d \in X_{j-\frac12}$, $\partial S_j$ changes direction by $-(\pi-\theta_d)$. By the Gauss-Bonnet Theorem,
\[ 2\pi\chi(S_j) =
\iint_{S_j} K\,dA
+\int_{S^1} k_g(t) \cdot \mathbf1_{\mathrm{ind}_{\Gamma, b}} \, dt
+ \sum_{d \in X_{j - \frac12}}(\pi - \theta_d)
- \sum_{d \in X_{j + \frac12}}(\pi - \theta_d)
\]
and so
\[\sum_{j \in \frac12 \mathbb Z \setminus \mathbb Z} 2\pi \chi(S_j) q^j =\]
\[\iint_{S} K\sum_{i=0}^\infty q^{\mathrm{ind}_{\Gamma, b} - i - \frac12}\,dA
+\int_{S^1} k_g(t) \cdot q^{\mathrm{ind}_{\Gamma, b}(\Gamma(t))} \, dt
+\sum_{d \in X}(\pi - \theta_d)(q^{\mathrm{ind}_{\Gamma, b}(d) + \frac12} - q^{\mathrm{ind}_{\Gamma, b}(d) - \frac12})\]
\[ = \iint_{S} K\frac{q^{\mathrm{ind}_{\Gamma,b}}}{q^{\frac12} - q^{-\frac12}}\,dA
+\int_{S^1} k_g(t) \cdot q^{\mathrm{ind}_{\Gamma, b}(\Gamma(t))} \, dt
+\sum_{d \in X}(\pi - \theta_d)(q^{\mathrm{ind}_{\Gamma, b}(d) + \frac12} - q^{\mathrm{ind}_{\Gamma, b}(d) - \frac12}) \]
in $C^\infty((1,+\infty))$. Applying the Gauss-Bonnet Theorem to all of $S$ gives $2\pi \chi(S) = \iint_S K \,dA$, so
\[\sum_{j \in \frac12 \mathbb Z \setminus \mathbb Z} 2\pi \chi(S_j) q^j - \frac{2\pi\chi(S)}{q^{\frac12} - q^{-\frac12}}
- \sum_{d \in X}\pi(q^{\mathrm{ind}_{\Gamma, b}(d) + \frac12} - q^{\mathrm{ind}_{\Gamma, b}(d) - \frac12})  \]
\[ = \iint_S K \cdot \frac{q^{\mathrm{ind}_{\Gamma, b}}-1}{q^{\frac12}-q^{-\frac12}} \, dA 
+\int_{S_1} k_g(t) \cdot q^{\mathrm{ind}_{\Gamma, b}(\Gamma(t))} \, dt
- \sum_{d \in X} \theta_d \cdot q^{\mathrm{ind}_{\Gamma, b}(d)}(q^{\frac12}-q^{-\frac12})
\]
\[ = 2\pi I_q(\Gamma,b) \]
Thus
\begin{equation}\label{topological-formula}
 I_q(\Gamma,b) =
\sum_{j \in \frac12 \mathbb Z \setminus \mathbb Z} \chi(S_j) q^j 
- \frac{\chi(S)}{q^{\frac12} - q^{-\frac12}}
- \frac12\sum_{d \in X}(q^{\mathrm{ind}_{\Gamma, b}(d) + \frac12} - q^{\mathrm{ind}_{\Gamma, b}(d) - \frac12})
\end{equation}
None of the terms on the right of (\ref{topological-formula}) depend on the Riemannian metric, so $I_q(\Gamma,b)$ is preserved under orientation-preserving diffeomorphisms.
\end{proof}
\begin{rem}
Although two of the terms on the right side of Equation (\ref{topological-formula}) diverge as $q \to 1$, $I_q(\Gamma,b)$ is a polynomial in $q^{\frac12}$ and $q^{-\frac12}$ (as is clear from its integral definition) and is thus defined for all $q>0$.
\end{rem}
\begin{rem}
It is not quite true that $I_q(\Gamma,b)$ is invariant under regular homotopies of $\Gamma$ in the space of generic curves; such a homotopy might change the position of $\Gamma$ relative to the base point $b$. However, if we view $\Gamma$ together with the choice of base point as a generic immersion $S^1 \sqcup \{\bullet\} \to S$, it follows immediately from the theorem that $I_q$ is invariant under regular homotopies in the space of such generic immersions.
\end{rem}

\section{Behavior under change of base point}
For a new base point $b'$, $\mathrm{ind}_{\Gamma, b'}$ and $\mathrm{ind}_{\Gamma, b}$ differ by a constant. Suppose $\mathrm{ind}_{\Gamma, b'}=\mathrm{ind}_{\Gamma, b} + C$. Then 
\begin{multline}\label{basepoint-change}
 I_q(\Gamma, b')
= \frac1{2\pi}\left(
\int_{S_1} k_g(t) \cdot q^{\mathrm{ind}_{\Gamma, b}(\Gamma(t))+C} \, dt
- \sum_{d \in X} \theta_d \cdot q^{\mathrm{ind}_{\Gamma, b}(d)+C}(q^{\frac12}-q^{-\frac12})\right.\\
\left.+ \iint_S K \cdot \frac{q^{\mathrm{ind}_{\Gamma, b}+C}-1}{q^{\frac12}-q^{-\frac12}} \, dA
\right)
\end{multline}
\[ = q^C I_q(\Gamma, b)
+ \frac1{2\pi} \iint_S K \cdot \frac{q^C - 1}{q^{\frac12}-q^{-\frac12}} \, dA\]
\[=  q^C I_q(\Gamma, b) + \frac{q^C - 1}{q^{\frac12}-q^{-\frac12}} \chi(S) \]

\section{Relation to rotation number}
Since Lanzat and Polyak's polynomial $I_q(\Gamma)$ is a quantum deformation of the rotation number, a natural question is what $I_q(\Gamma,b)$ is a quantum deformation of.
Calculating $I_1(\Gamma,b)$ explicitly by substituting $q=1$ into the integral expression of $I_q(\Gamma,b)$, we get
\begin{equation}\label{GB-multiplicities}
 I_1(\Gamma,b) = \frac1{2\pi} \left( \int_{S^1} k_g(t)\,dt + \iint_S K \cdot \mathrm{ind}_{\Gamma,b} \,dA \right)
\end{equation}
In the special case where $\Gamma$ is the boundary of a disk $D \subset S$ and the base point $b$ is on the outside of $D$, we have
\[ \frac1{2\pi} \left( \int_{S^1} k_g(t)\,dt + \iint_S K \cdot \mathrm{ind}_{\Gamma,b} \,dA \right)
= \frac1{2\pi} \left( \int_{\partial D} k_g \, ds + \iint_D K \, dA\right) = 1,\]
one form of the Gauss-Bonnet theorem. Thus Equation (\ref{GB-multiplicities}) is a ``Gauss-Bonnet theorem with multiplicities'' in which $\Gamma$ may wrap around multiple times. But what exactly does $I_1(\Gamma,b)$ measure? By the above intuitive reasoning and by analogy with Lanzat and Polyak's results, we should hope that $I_1(\Gamma, b)$ will coincide with the generalized definition of the rotation number. This indeed turns out to be the case, as we will demonstrate shortly.

McIntyre and Cairns give a formula for the rotation number for generic immersed curves (which they call \emph{normal}) in \cite{mc}. In the case where the curve is homologically trivial, their formula reduces to the following:
\begin{thm}[\cite{mc}]
For each positive (respectively negative) integer $i$, let $S'_i \subseteq S$ be the region where $\mathrm{ind}_{\Gamma, b}$ is greater than or equal to (respectively less than or equal to) $i$. Then the rotation number of $\Gamma$ is given by
\[ \sum_{i > 0} \chi(S'_i) - \sum_{i < 0} \chi(S'_i) \]
if $S$ is a torus, and
\[ \sum_{i > 0} \chi(S'_i) - \sum_{i < 0} \chi(S'_i) \mod |\chi(S)| \]
otherwise.
\end{thm}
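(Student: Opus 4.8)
The plan is to deduce the statement from Theorem~1 and the base-point-change formula (\ref{basepoint-change}): first I would rewrite $I_1(\Gamma,b)$ purely topologically, and then show that this quantity computes the rotation number.

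\emph{Step 1: a topological formula for $I_1(\Gamma,b)$.} Starting from (\ref{topological-formula}), the double-point sum vanishes identically at $q=1$, so the only obstruction to setting $q=1$ is that the other two terms diverge separately. To resolve this, note that $S_j=S$ for all sufficiently negative $j\in\frac12\mathbb Z\setminus\mathbb Z$; writing $\chi(S_j)=\chi(S)+(\chi(S_j)-\chi(S))$ and using $\sum_{j<0}q^{j}=(q^{\frac12}-q^{-\frac12})^{-1}$ for $q>1$, the divergent tail $\chi(S)\sum_{j<0}q^{j}$ cancels exactly against $-\chi(S)/(q^{\frac12}-q^{-\frac12})$. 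Then (\ref{topological-formula}) reads
\[ I_q(\Gamma,b)=\sum_{j>0}\chi(S_j)q^{j}+\sum_{j<0}\bigl(\chi(S_j)-\chi(S)\bigr)q^{j}-\tfrac12\sum_{d\in X}\bigl(q^{\mathrm{ind}_{\Gamma,b}(d)+\frac12}-q^{\mathrm{ind}_{\Gamma,b}(d)-\frac12}\bigr), \]
which is manifestly finite at $q=1$, with value $\sum_{j>0}\chi(S_j)-\sum_{j<0}\bigl(\chi(S)-\chi(S_j)\bigr)$. Since $S_{i-1/2}=S'_i$ for each integer $i>0$, while for each integer $i<0$ the region $S'_i$ is the closure of the complement of $S_{i+1/2}$ (so $\chi(S'_i)=\chi(S)-\chi(S_{i+1/2})$, the shared boundary being a disjoint union of circles), this becomes $I_1(\Gamma,b)=\sum_{i>0}\chi(S'_i)-\sum_{i<0}\chi(S'_i)$. (As a consistency check, the same identity follows directly from the integral formula (\ref{GB-multiplicities}) by telescoping $\mathrm{ind}_{\Gamma,b}$ and applying Gauss-Bonnet to each $S'_i$.) In particular $I_1(\Gamma,b)$ is an integer, and the claimed expression is the one appearing in the theorem.

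\emph{Step 2: $I_1(\Gamma,b)$ computes the rotation number.} By the characterization recalled in the introduction, it is enough to show that $\Gamma\mapsto I_1(\Gamma,b)\bmod|\chi(S)|$ is a well-defined homomorphism from regular homotopy classes of homologically trivial curves to $\mathbb Z/|\chi(S)|\mathbb Z$ sending a small counterclockwise contractible loop to $1$. The four needed facts are short. \emph{(a)} Independence of $b$ modulo $|\chi(S)|$: letting $q\to1$ in (\ref{basepoint-change}) gives $I_1(\Gamma,b')=I_1(\Gamma,b)+C\chi(S)$ when $\mathrm{ind}_{\Gamma,b'}=\mathrm{ind}_{\Gamma,b}+C$. \emph{(b)} Regular homotopy invariance: along a regular homotopy $\Gamma_t$, the integral formula (\ref{GB-multiplicities}) shows $t\mapsto I_1(\Gamma_t,b)$ is continuous at each time $\Gamma_t$ avoids $b$ (the curvature integral varies smoothly, and the jump in $\iint_S K\cdot\mathrm{ind}_{\Gamma_t,b}$ across a self-tangency or triple point tends to $0$ because the affected region shrinks to a point), while at the finitely many times $\Gamma_t$ crosses $b$ the value jumps by a multiple of $\chi(S)$, by (a) applied with a perturbed base point; so $I_1(\Gamma_t,b)\bmod|\chi(S)|$ is continuous with values in a discrete set, hence constant. \emph{(c)} Additivity: for based smooth loops $\Gamma_1,\Gamma_2$ through a common point with a common tangent there, $\mathrm{ind}_{\Gamma_1\ast\Gamma_2,b}=\mathrm{ind}_{\Gamma_1,b}+\mathrm{ind}_{\Gamma_2,b}$ (intersection index along a path from $b$ is additive) and $\int k_g$ is additive (no corner contribution, the tangent matching at the junction), so (\ref{GB-multiplicities}) yields $I_1(\Gamma_1\ast\Gamma_2,b)=I_1(\Gamma_1,b)+I_1(\Gamma_2,b)$. \emph{(d)} Normalization: a small counterclockwise contractible loop bounds an embedded disk $D$ with $b\notin D$, so $\mathrm{ind}_{\Gamma,b}=\mathbf 1_D$ and (\ref{GB-multiplicities}) collapses to $\frac1{2\pi}\bigl(\int_{\partial D}k_g+\iint_D K\bigr)=\chi(D)=1$, as already observed after (\ref{GB-multiplicities}). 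Uniqueness of the homomorphism then completes the identification; when $S$ is a torus, $\chi(S)=0$ removes the reduction and $I_1(\Gamma,b)$ itself is the rotation number.

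I expect the main obstacle to be the base-point bookkeeping in (b): one must verify that when the homotopy sweeps $\Gamma_t$ past $b$ the value of $I_1(\Gamma_t,b)$ changes by exactly $\pm\chi(S)$, which is the same phenomenon that forces the ``$\bmod|\chi(S)|$'' in the statement and makes the torus case special. The divergence cancellation in Step~1 is routine but also wants care. I note that uniqueness of the homomorphism in the introduction is used here without proof, being part of the quoted definition of the rotation number from \cite{mc}; one could instead bypass Step~2 by checking that the general formula of \cite{mc} for normal curves specializes to the displayed expression for homologically trivial $\Gamma$, but the argument above is self-contained given Theorem~1.
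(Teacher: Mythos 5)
Your proposal is essentially sound, but it takes a route the paper never takes: the paper offers no proof of this statement at all --- it is quoted verbatim from McIntyre and Cairns \cite{mc} and then \emph{used} (together with the identification $S'_i=S_{i-\frac12}$ for $i>0$, $S'_i=S\setminus S_{i+\frac12}$ for $i<0$, which is exactly your Step~1) to deduce the Proposition that $I_1(\Gamma,b)$ computes the rotation number. You reverse this logic: Step~1 reproduces the paper's own computation showing $I_1(\Gamma,b)=\sum_{i>0}\chi(S'_i)-\sum_{i<0}\chi(S'_i)$ (the cancellation of the divergent tail against $\chi(S)/(q^{\frac12}-q^{-\frac12})$ is correct, and is implicitly what the paper does when it rewrites (\ref{topological-formula}) as $\sum_j(\chi(S_j)-\mathbf 1_{j<0}\chi(S))q^j-\cdots$), and Step~2 then verifies directly, against the axiomatic definition of the rotation number adopted in the introduction, that $I_1(\Gamma,b)\bmod|\chi(S)|$ is the required homomorphism. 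This makes the cited theorem (in the homologically trivial case) a corollary of Theorem~1 rather than an input, which is a genuine gain in self-containedness; what it costs is that your argument covers only homologically trivial curves (McIntyre--Cairns treat all normal curves and the dependence on generators), and that Step~2 leans on two points you should make explicit: the continuity of $t\mapsto\int k_g$ requires the regular homotopy to be smooth (or a reduction to isotopies plus self-tangency, triple-point and base-point-crossing moves), and the homomorphism property presupposes the based-loop-with-tangent group structure and the uniqueness statement from \cite{mc}, which the paper itself takes as the definition, so your use of it is consistent with the paper's framework. The key mechanisms you identify --- integrality of $I_1$ from the Euler-characteristic formula, the jump by $C\chi(S)$ when the curve sweeps past $b$, additivity of $\mathrm{ind}_{\Gamma,b}$ and of $\int k_g$ under concatenation, and the Gauss--Bonnet normalization on a small disk --- are all correct.
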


Now observe that
\[ S'_i = \left\{ \begin{array}{ll} S \setminus S_{i + \frac12} & i<0 \\ S_{i - \frac12} & i>0 \end{array} \right. \]
so
\[ \sum_{i > 0} \chi(S'_i) - \sum_{i < 0} \chi(S'_i)
= \sum_{i > 0} \chi(S_{i - \frac12}) - \sum_{i < 0} \chi(S \setminus S_{i + \frac12}) \]
\[= \sum_{i > 0} \chi(S_{i - \frac12}) - \sum_{i < 0} (\chi(S) - \chi(S_{i + \frac12}))
= \sum_{j \in \frac12 \mathbb Z \setminus \mathbb Z} (\chi(S_j) - \mathbf 1_{j<0} \cdot \chi(S))\]
By Equation (\ref{topological-formula}),
\[ I_q(\Gamma, b)
= \sum_{j \in \frac12 \mathbb Z \setminus \mathbb Z} \chi(S_j) q^j 
- \frac{\chi(S)}{q^{\frac12} - q^{-\frac12}}
- \frac12\sum_{d \in X}(q^{\mathrm{ind}_{\Gamma, b}(d) + \frac12} - q^{\mathrm{ind}_{\Gamma, b}(d) - \frac12}) \]
\[ = \sum_{j \in \frac12 \mathbb Z \setminus \mathbb Z}(\chi(S_j)- \chi(S))q^j - \frac12\sum_{d \in X}(q^{\mathrm{ind}_{\Gamma, b}(d) + \frac12} - q^{\mathrm{ind}_{\Gamma, b}(d) - \frac12}) \]
so evaluating $I_q(\Gamma, b)$ at $q=1$ gives
\[ I_1(\Gamma,b) =  \sum_{j \in \frac12 \mathbb Z \setminus \mathbb Z} (\chi(S_j) - \mathbf 1_{j<0} \cdot \chi(S))
= \sum_{i > 0} \chi(S'_i) - \sum_{i < 0} \chi(S'_i)\]
Thus,
\begin{prop}
The rotation number of $\Gamma$ is given by
$ I_1(\Gamma,b) $
if $S$ is a torus, and
$ I_1(\Gamma,b) \mod |\chi(S)| $
otherwise.
\end{prop}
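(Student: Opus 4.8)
The plan is to read off the statement by combining the two results already assembled: the topological expression (\ref{topological-formula}) for $I_q(\Gamma,b)$ from the proof of Theorem 1, and the McIntyre--Cairns formula for the rotation number of a homologically trivial normal curve. Since both are in hand, what remains is purely bookkeeping with Euler characteristics, carried out at $q=1$.

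First I would rewrite the McIntyre--Cairns regions $S'_i$, indexed by nonzero integers, in terms of the half-integer-indexed subsurfaces $S_j$ from the proof of Theorem 1. Because $\mathrm{ind}_{\Gamma,b}$ is integer-valued off $\Gamma$, the inequality $\mathrm{ind}_{\Gamma,b}\ge i$ is equivalent to $\mathrm{ind}_{\Gamma,b}>i-\tfrac12$, so $S'_i=S_{i-1/2}$ for $i>0$; likewise $\mathrm{ind}_{\Gamma,b}\le i$ is equivalent to $\mathrm{ind}_{\Gamma,b}<i+\tfrac12$, so for $i<0$ the region $S'_i$ is the closure of the complement $S\setminus S_{i+1/2}$. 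Since $S$ is glued from $S_{i+1/2}$ and this complement along a disjoint union of circles (the arcs of $\Gamma$ of index $i+\tfrac12$), additivity of the Euler characteristic together with $\chi(S^1)=0$ gives $\chi(S'_i)=\chi(S)-\chi(S_{i+1/2})$ for $i<0$; this vanishes once $i$ is small enough that $\{\mathrm{ind}_{\Gamma,b}\ge i+1\}$ is all of $S$, so the sum over negative $i$ is finite. Substituting and reindexing both sums by a single $j\in\tfrac12\mathbb Z\setminus\mathbb Z$ turns $\sum_{i>0}\chi(S'_i)-\sum_{i<0}\chi(S'_i)$ into the finite sum $\sum_j\bigl(\chi(S_j)-\mathbf 1_{j<0}\,\chi(S)\bigr)$.

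Next I would evaluate $I_q(\Gamma,b)$ at $q=1$ via (\ref{topological-formula}). The double-point contribution $-\tfrac12\sum_{d\in X}\bigl(q^{\mathrm{ind}_{\Gamma,b}(d)+1/2}-q^{\mathrm{ind}_{\Gamma,b}(d)-1/2}\bigr)$ clearly vanishes at $q=1$. The remaining two terms, $\sum_j\chi(S_j)q^j$ and $-\chi(S)/(q^{1/2}-q^{-1/2})$, are individually singular at $q=1$, so the one point requiring care is to combine them before substituting: for $q>1$ one has $1/(q^{1/2}-q^{-1/2})=\sum_{j<0}q^j$, whence the combination collapses to the finite sum $\sum_j\bigl(\chi(S_j)-\mathbf 1_{j<0}\,\chi(S)\bigr)q^j$, which extends continuously to $q=1$ with value $\sum_j\bigl(\chi(S_j)-\mathbf 1_{j<0}\,\chi(S)\bigr)$. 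This is exactly the reduced McIntyre--Cairns expression from the previous step, so $I_1(\Gamma,b)=\sum_{i>0}\chi(S'_i)-\sum_{i<0}\chi(S'_i)$, and the two claimed identities follow directly from their theorem.

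The argument has no real obstacle; the only thing to watch is not substituting $q=1$ prematurely into (\ref{topological-formula}), since two of its terms diverge there and only their grouped form is regular. The identification $S'_i\leftrightarrow S_j$, the additivity of $\chi$ across the index walls, and the reindexing of the sums are all routine.
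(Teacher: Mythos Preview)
Your proposal is correct and follows essentially the same route as the paper: rewrite the McIntyre--Cairns regions $S'_i$ in terms of the $S_j$, use additivity of $\chi$ across the index walls to express $\sum_{i>0}\chi(S'_i)-\sum_{i<0}\chi(S'_i)$ as $\sum_j(\chi(S_j)-\mathbf 1_{j<0}\chi(S))$, and then evaluate (\ref{topological-formula}) at $q=1$ by first absorbing $\chi(S)/(q^{1/2}-q^{-1/2})$ into the sum over $j$. If anything, you are slightly more careful than the paper about the $q\to 1$ passage, explicitly expanding $1/(q^{1/2}-q^{-1/2})=\sum_{j<0}q^j$ for $q>1$ before substituting.
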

\begin{rem}
Using the formula for the change of base point, Equation (\ref{basepoint-change}), and plugging in $q=1$, we see that if $\mathrm{ind}_{\Gamma,b'} = \mathrm{ind}_{\Gamma,b} + C$,
\[ I_1(\Gamma, b') = I_1(\Gamma, b) + C\chi(S). \]
Thus $I_1(\Gamma, b)$ may depend on the choice of $b$ but $I_1(\Gamma, b) \mod |\chi(S)|$ does not.
\end{rem}

\section{Relation to the integral with respect to Euler characteristic}
Let $\tilde \Gamma$ be the smoothing of $\Gamma$, and let $\tilde S_j \subseteq S$ be the region where $\mathrm{ind}_{\tilde\Gamma,b} > 0$ for all $j \in \frac12 \mathbb Z \setminus \mathbb Z$. The Euler characteristic of the region where $\mathrm{ind}_{\tilde\Gamma,b}=i$ is
\[ \chi(\tilde S_{i -\frac12} \setminus \tilde S_{i+\frac12})
= \chi(\tilde S_{i -\frac12})- \chi(\tilde S_{i+\frac12})
= \chi(S_{i -\frac12})- \chi(S_{i+\frac12})
= a_{i -\frac12}- a_{i+\frac12} + \delta_{i,0} \cdot \chi(S)\]
so
\[ \int_{S \setminus \tilde \Gamma} q^{\mathrm{ind}_{\tilde\Gamma,b}} \, d\chi
= \chi(S) + (q^{\frac12} - q^{-\frac12}) \sum_{j \in \frac12 \mathbb Z \setminus \mathbb Z} a_j\]
\[= \chi(S) + (q^{\frac12} - q^{-\frac12}) \left( I_q(\Gamma, b) + \frac12 \sum_{d\in X} (q^\frac12 - q^{-\frac12})q^{\mathrm{ind}_{\Gamma,b}(d)} \right) \]
Solving for $I_q(\Gamma, b)$ yields
\begin{equation}\label{euler} I_q(\Gamma, b) \\
 = -\frac12 \sum_{d\in X} (q^\frac12 - q^{-\frac12})q^{\mathrm{ind}_{\Gamma,b}(d)}
+ \int_{S \setminus \tilde \Gamma} \frac{q^{\mathrm{ind}_{\tilde\Gamma,b}}-1}{q^{\frac12} - q^{-\frac12}} \, d\chi
\end{equation}

\section{Relation to $J^+$ invariants}
In order to connect our results to Viro's generalization of $J^+$, it is necessary to further generalize our definition of an index function. Let a \emph{rational index function} for $\Gamma$ be any function $S \setminus \Gamma \to \mathbb Q$ which is locally constant and which increases by 1 when we make a positive crossing over $\Gamma$. Clearly any two rational index functions for $\Gamma$ differ by a rational constant. For any rational index function $\iota$ for $\Gamma$ we can extend $\iota$ to a function on $S$ by averaging over adjacent regions as before, and then define $I_q(\iota)$ by replacing $\mathrm{ind}_{\Gamma, b}$ with $\iota$ in the definition of $I_q(\Gamma,b)$. (Note that $I_q(\iota)$ no longer need belong to $\mathbb R[q^{\frac12}, q^{-\frac12}]$, but is a member of $\mathbb R[q^t: t\in \mathbb Q]$.) For any rational index function $\iota$ for $\Gamma$ there is a unique rational index function $\tilde\iota$ for $\tilde\Gamma$ such that $\iota$ and $\tilde\iota$ agree away from the curve. The formula for $I_q$ as an integral with respect to the Euler characteristic and the formula for the change of base point generalize in the obvious way. Explicitly,
\begin{equation}\label{eulerQ} I_q(\iota) \\
 = -\frac12 \sum_{d\in X} (q^\frac12 - q^{-\frac12})q^{\iota(d)}
+ \int_{S \setminus \tilde \Gamma} \frac{q^{\tilde\iota}-1}{q^{\frac12} - q^{-\frac12}} \, d\chi
\end{equation}
and for any two rational index functions $\iota, \iota'$ with $\iota'-\iota = C$,
\[ I_q(\iota') =  q^C I_q(\iota) + \frac{q^C - 1}{q^{\frac12}-q^{-\frac12}} \chi(S) \]

\begin{rem}
For any $\zeta$ in the relative homology group $H_2(S, \Gamma; \mathbb Q)$ such that $\partial(\zeta)$ is the fundamental class of $\Gamma$, one can define $\mathrm{ind}_\zeta(x)$ for $x \in S \setminus \Gamma$ as follows: $\mathrm{ind}_\zeta(x)$ is the image of $\zeta$ under the map
\[ H_2(S, \Gamma; \mathbb Q) \to H_2(S, S \setminus x; \mathbb Q) \to \mathbb Q \]
where the map $H_2(S, \Gamma; \mathbb Q) \to H_2(S, S \setminus x; \mathbb Q)$ is the relativization homomorphism and the map $H_2(S, S \setminus x; \mathbb Q) \to \mathbb Q$ is the canonical isomorphism. It is easily checked that $\zeta \mapsto \mathrm{ind}_\zeta$ gives an isomorphism between elements of $H_2(S, \Gamma; \mathbb Q)$ with boundary $\Gamma$ and rational index functions for $\Gamma$, so results about general rational index functions may be rephrased as results about functions of the form $\mathrm{ind}_\zeta$; indeed, it is the latter formulation that Viro uses to define $J^-$. Here we present his results using the language of rational index functions for greater congruence with the rest of the paper.
\end{rem}

For $\chi(S) \neq 0$, Viro defines $J^-(\Gamma)$ as follows: Find the unique rational index function $\tilde\iota_0$ for $\tilde \Gamma$ such that
\[ \int_{S \setminus \tilde\Gamma} \tilde\iota_0 \, d \chi = 0\]
(Note that the condition $\chi(S) \neq 0$ is necessary here to ensure the existence and uniqueness of $\tilde\iota_0$.)
Then
\[ J^-(\Gamma) = 1 - \int_{S \setminus \tilde\Gamma} \tilde\iota_0^2 \, d \chi \]
We can now state the relationship between $I_q$ and $J^+$.
\begin{prop}
Assuming that $\chi(S) \neq 0$,
\[ J^+(\Gamma) = \frac{I_1(\Gamma,b)^2}{\chi(S)} - 2 I_1'(\Gamma,b) + 1 \]
\end{prop}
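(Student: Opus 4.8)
The plan is to read $J^+$ off the first-order behavior at $q=1$ of $I_q$ evaluated on Viro's distinguished ``balanced'' index function, and then to translate the answer back to the base point $b$ using the base-point change formula. Throughout I would parametrize $q=e^h$; since $dq/dh=1$ at $h=0$, a first $q$-derivative at $q=1$ equals the corresponding first $h$-derivative at $h=0$, which makes the expansions below convenient.

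First I would introduce the rational index function $\iota_0$ for $\Gamma$ whose smoothing is Viro's $\tilde\iota_0$, i.e.\ the unique rational index function for $\Gamma$ with $\int_{S\setminus\tilde\Gamma}\tilde\iota_0\,d\chi=0$. Evaluating Equation~(\ref{eulerQ}) at $q=1$ shows that for \emph{any} rational index function $\iota$ one has $I_1(\iota)=\int_{S\setminus\tilde\Gamma}\tilde\iota\,d\chi$: the double-point sum vanishes at $q=1$, and $\frac{q^{\tilde\iota}-1}{q^{\frac12}-q^{-\frac12}}\to\tilde\iota$ as $q\to1$. In particular $I_1(\iota_0)=0$.

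The key computation is to expand $I_q(\iota_0)$ to first order via Equation~(\ref{eulerQ}). Writing $q=e^h$ one has $q^{\frac12}-q^{-\frac12}=h+O(h^3)$ and $q^t=1+th+\tfrac12t^2h^2+O(h^3)$, so each term $(q^{\frac12}-q^{-\frac12})q^{\iota_0(d)}$ equals $h+O(h^2)$ and the double-point sum contributes $-\tfrac n2 h+O(h^2)$, where $n=|X|$, while $\frac{q^{\tilde\iota_0}-1}{q^{\frac12}-q^{-\frac12}}=\tilde\iota_0+\tfrac12\tilde\iota_0^{\,2}h+O(h^2)$. Integrating against $d\chi$ and using $\int_{S\setminus\tilde\Gamma}\tilde\iota_0\,d\chi=0$ gives $I_q(\iota_0)=\bigl(\tfrac12\int_{S\setminus\tilde\Gamma}\tilde\iota_0^{\,2}\,d\chi-\tfrac n2\bigr)h+O(h^2)$, hence $I_1'(\iota_0)=\tfrac12\int_{S\setminus\tilde\Gamma}\tilde\iota_0^{\,2}\,d\chi-\tfrac n2$. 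Combining this with Viro's definition $J^-(\Gamma)=1-\int_{S\setminus\tilde\Gamma}\tilde\iota_0^{\,2}\,d\chi$ and with the relation $J^+=J^-+n$ yields the clean identity $J^+(\Gamma)=1-2I_1'(\iota_0)$.

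Finally I would transfer this to the base point $b$. Writing $\iota_0=\mathrm{ind}_{\Gamma,b}+C$, the base-point change formula for rational index functions gives $I_q(\iota_0)=q^C I_q(\Gamma,b)+\frac{q^C-1}{q^{\frac12}-q^{-\frac12}}\chi(S)$. Evaluating at $q=1$ and using $I_1(\iota_0)=0$ forces $C=-I_1(\Gamma,b)/\chi(S)$ (this division requires $\chi(S)\neq0$). Differentiating the same identity at $q=1$ (again with $q=e^h$; the $\chi(S)$ term expands as $C+\tfrac12C^2h+O(h^2)$, contributing $\tfrac12C^2\chi(S)$) gives $I_1'(\iota_0)=CI_1(\Gamma,b)+I_1'(\Gamma,b)+\tfrac12C^2\chi(S)$, and substituting $C=-I_1(\Gamma,b)/\chi(S)$ produces $I_1'(\iota_0)=I_1'(\Gamma,b)-\tfrac12I_1(\Gamma,b)^2/\chi(S)$. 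Plugging this into $J^+(\Gamma)=1-2I_1'(\iota_0)$ gives the stated formula. The main obstacle is purely the bookkeeping of these Taylor expansions --- getting the second-order coefficient of $\frac{q^{\tilde\iota_0}-1}{q^{\frac12}-q^{-\frac12}}$ right and keeping track of which quantities are evaluated before, rather than after, differentiation --- together with confirming that the Euler-characteristic integral formula and the base-point change formula, established for the integer-valued $\mathrm{ind}_{\Gamma,b}$, carry over verbatim to rational index functions, as the text asserts. I expect no genuine analytic or topological difficulty beyond this.
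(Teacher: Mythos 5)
Your proposal is correct, and it rests on exactly the same ingredients as the paper's proof: the Euler-characteristic expression \eqref{eulerQ}, Viro's definition of $J^-$ via the balanced index function $\tilde\iota_0$, the relation $J^+=J^-+|X|$, and the identification $\tilde\iota_0=\mathrm{ind}_{\tilde\Gamma,b}-I_1(\Gamma,b)/\chi(S)$ (your $C=-I_1(\Gamma,b)/\chi(S)$), which is where $\chi(S)\neq0$ enters in both arguments. The difference is organizational rather than substantive. The paper reads $I_1(\Gamma,b)$ and $I_1'(\Gamma,b)$ off \eqref{euler} for the integer-valued index, substitutes the explicit $\tilde\iota_0$ into Viro's integral, and expands the square; the three terms it obtains there are precisely the terms $I_1'(\Gamma,b)+C\,I_1(\Gamma,b)+\tfrac12C^2\chi(S)$ that you produce by differentiating the base-point-change identity at $q=1$, so the computations are term-for-term equivalent. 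What your packaging buys is the intermediate base-point-free identity $J^+(\Gamma)=1-2I_1'(\iota_0)$, which exhibits $J^+$ as first-order data of $I_q$ at the balanced index function, in direct analogy with Lanzat and Polyak's planar relation $I_1'(\Gamma)=\tfrac12(1-J^+(\Gamma))$; the paper's route is slightly shorter and avoids invoking the rational-index generalizations \eqref{eulerQ} and the change-of-index formula, needing only \eqref{euler} and Viro's definition. Your Taylor bookkeeping (via $q=e^h$) is accurate, and your appeal to the paper's assertion that \eqref{euler} and \eqref{basepoint-change} carry over verbatim to rational index functions is exactly the justification the paper itself relies on, so no gap remains.
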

\begin{proof}
From Equation (\ref{euler}) it immediately follows that
\[ I_1(\Gamma,b) = \int_{S \setminus \tilde \Gamma} (\mathrm{ind}_{\tilde\Gamma,b}) \, d\chi \]
and
\[ I'_1(\Gamma,b) = -\frac{|X|}{2} + \frac12 \int_{S \setminus \tilde \Gamma}  (\mathrm{ind}_{\tilde\Gamma,b})^2 \, d\chi \]
Then $\mathrm{ind}_{\tilde\Gamma,b} - I_1(\Gamma,b)/\chi(S)$ is a rational index function for $\tilde\Gamma$ and
\[ \int_{S \setminus \tilde \Gamma} \left(\mathrm{ind}_{\tilde\Gamma,b} - \frac{I_1(\Gamma,b)}{\chi(S)}\right) \, d\chi
= I_1(\Gamma,b) - I_1(\Gamma,b) = 0\]
so $\tilde\iota_0 = \mathrm{ind}_{\tilde\Gamma,b} - I_1(\Gamma,b)/\chi(S)$ is the unique rational index function for $\tilde\Gamma$ satisfying $\int_{S \setminus \tilde\Gamma} \tilde\iota_0 \, d \chi = 0$.
Now
\[ J^-(\Gamma)
= 1 - \int_{S \setminus \tilde\Gamma} \left(\mathrm{ind}_{\tilde\Gamma,b} - \frac{I_1(\iota)}{\chi(S)}\right)^2 \, d \chi\]
\[= 1 - \int_{S \setminus \tilde\Gamma} (\mathrm{ind}_{\tilde\Gamma,b})^2 \, d \chi 
+ \frac{2 I_1(\Gamma,b)}{\chi(S)}\int_{S \setminus \tilde\Gamma} (\mathrm{ind}_{\tilde\Gamma,b}) \, d \chi
- \frac{I_1(\Gamma,b)^2}{\chi(S)^2} \int_{S \setminus \tilde\Gamma} d \chi   \]
\[ = 1 - (2I'_1(\Gamma,b) + |X|) + \frac{2I_1(\Gamma,b)^2}{\chi(S)} - \frac{I_1(\Gamma,b)^2}{\chi(S)} \]
\[= \frac{I_1(\Gamma,b)^2}{\chi(S)} - 2I_1'(\Gamma,b) + 1 - |X|\]
and so
\[J^+(\Gamma) = J^-(\Gamma) + |X| = \frac{I_1(\Gamma,b)^2}{\chi(S)} - 2I_1'(\Gamma,b) + 1\]
\end{proof}

Plugging in the integral definition of $I_q(\Gamma,b)$, we get
\begin{multline}\label{integral-j+}
J^+(\Gamma) = 
\frac{1}{4\pi^2\chi(S)}\left( \int_{S^1} k_g(t)\,dt + \iint_S \mathrm{ind}_{\Gamma,b} \,dA \right)^2 \\
-\frac1\pi\left(\int_{S^1}k_g(t)\cdot\mathrm{ind}_{\Gamma, b}(\Gamma(t))\,dt
- \sum_{d \in X} \theta_d
+ \frac12\iint_S K \cdot (\mathrm{ind}_{\Gamma, b})^2 \, dA\right)
+ 1.
\end{multline}

\section{A formula for the $SJ^+$ invariant}
Let $\Gamma$ be any curve on the unit sphere. (Note that $\Gamma$ is automatically homologically trivial.) 
Using the facts that $K=1$ on $S^2$ and $\chi(S^2) = 2$ to simplify the expression in Equation (\ref{integral-j+}) gives the following expression for the $SJ^+$ invariant:
\begin{multline*}
 SJ^+(\Gamma) = 
\frac{1}{8\pi^2}\left( \int_{S^1} k_g(t)\,dt + \iint_S \mathrm{ind}_{\Gamma,b} \,dA \right)^2 \\
-\frac1\pi\left(\int_{S^1}k_g(t)\cdot\mathrm{ind}_{\Gamma, b}(\Gamma(t))\,dt
- \sum_{d \in X} \theta_d
+ \frac12\iint_S(\mathrm{ind}_{\Gamma, b})^2 \, dA\right)
+ 1
\end{multline*}

\section{Acknowledgments}
The main results of this paper were obtained in the 2014 Knots \& Graphs summer undergraduate research group at the Ohio State University. I am grateful to the Ohio State University for funding the program, to fellow members of the research group for discussions, and especially to Dr. Sergei Chmutov for invaluable guidance throughout the research process.

\end{document}